\newtheorem{theorem}{Theorem}[section]
\newtheorem{thmy}{Theorem}
\newtheorem{lemma}[theorem]{Lemma}
\newtheorem{corollary}[theorem]{Corollary}
\def\barr{\begin{array}}
\def\earr{\end{array}}
\title{On groups occurring as absolute centers\\ of finite groups}
\author{Georgiana Fasol\u a and Marius T\u arn\u auceanu}
\date{October 19, 2023}
\begin{document}

\maketitle

\begin{abstract}
Given a construction $f$ on groups, we say that a group $G$ is \textit{$f$-realisable} if there is a group $H$ such that $G\cong f(H)$, and \textit{completely $f$-realisable} if there is a group $H$ such that $G\cong f(H)$ and every subgroup of $G$ is isomorphic to $f(H_1)$ for some subgroup $H_1$ of $H$ and vice versa.

Denote by $L(G)$ the absolute center of a group $G$, that is the set of elements of $G$ fixed by all automorphisms of $G$. By using the structure of the automorphism group of a ZM-group, in this paper we prove that cyclic groups $C_N$, $N\in\mathbb{N}^*$, are completely $L$-realisable.
\end{abstract}

{\small
\noindent
{\bf MSC2020\,:} Primary 20D30; Secondary 20D45, 20D25.

\noindent
{\bf Key words\,:} inverse group theory, (completely) $f$-realisable group, automorphism group, absolute center of a group, ZM-group.}

\section{Introduction}

In group theory, there are many constructions $f$ which start from a group $H$ and produce another group $f(H)$. Examples of such group-theoretical constructions are: center, central quotient, derived quotient, Frattini subgroup, Fitting subgroup, Chermak-Delgado subgroup, automorphism group, Schur multiplier, other cohomology groups, and various constructions from permutation groups. For each of these constructions, there is an inverse problem:
\begin{equation}
\mbox{Given a group } G, \mbox{ is there a group } H \mbox{ such that } G\cong f(H)?
\end{equation}

Several new results related to this problem have been obtained in \cite{1,2,10,11} for $f(H)=D(H)$, the derived subgroup of $H$. Note that in these papers the group $H$ with the property $G\cong D(H)$ has been called an \textit{integral} of $G$ by analogy with calculus. Moreover, we recall Problem 10.19 in \cite{1} that asks to classify the groups in which all subgroups are integrable.

Other results of the same type are given by \cite{7,15,21,22,23} for $f(H)=\Phi(H)$, the Frattini subgroup of $H$. In this case, there is a precise cha\-rac\-te\-ri\-za\-tion of finite groups $G$ for which (1) has solutions, namely
\begin{equation}
G\cong\Phi(H) \mbox{ for some group } H \mbox{ if and only if } {\rm Inn}(G)\subseteq\Phi({\rm Aut}(G))\nonumber
\end{equation}(see \cite{7}).

The same problem for $f(H)={\rm Aut}(H)$, the automorphism group of $H$, has been studied in \cite{16,17}. We also recall the well-known class of \textit{capable groups}, i.e. the groups $G$ such that (1) has solutions for $f(H)={\rm Inn}(H)$, the inner automorphism group of $H$. Their study was initiated by R. Baer \cite{3} and continued in many other papers (see e.g. \cite{4,8}).

Inspired by these studies, in \cite{9} we introduced the following two notions. Given a construction $f$ on groups, we say that a group $G$ is
\begin{description}
\item[\hspace{10mm}{\rm a)}]\textit{$f$-realisable} if there is a group $H$ such that $G\cong f(H)$
\end{description}and
\begin{description}
\item[\hspace{10mm}{\rm b)}]\textit{completely $f$-realisable} if there is a group $H$ such that:
\begin{itemize}
\item[{\rm i)}] $G\cong f(H)$;
\item[{\rm ii)}] $\forall\, G_1\leq G, \exists\, H_1\leq H$ such that $G_1\cong f(H_1)$;
\item[{\rm iii)}] $\forall\, H_1\leq H, \exists\, G_1\leq G$ such that $f(H_1)\cong G_1$.
\end{itemize}
\end{description}We determined completely ${\rm Aut}$-realisable groups. We also provided some results about (completely) $f$-realisable groups for $f=Z,F,M,D,\Phi$, where $Z(H)$, $F(H)$ and $M(H)$ denote the center, the Fitting subgroup and the Chermak-Delgado subgroup of the group $H$, respectively.

The current paper deals only with finite groups. We will continue the above study for $f=L$, where $L(H)$ is the absolute center of the group $H$, i.e.
\begin{equation}
L(H)=\{h\in H\mid \alpha(h)=h, \forall\, \alpha\in{\rm Aut}(H)\}.\nonumber
\end{equation}It has been introduced by Hegarty \cite{12}, together with the autocommutator subgroup of the group $H$:
\begin{equation}
K(H)=\langle h^{-1}\alpha(h)\mid h\in H, \alpha\in{\rm Aut}(H)\rangle.\nonumber
\end{equation}\newpage \noindent Hegarty proved an analogue of Schur’s theorem for the absolute center and the autocommutator subgroup, namely that if $H$ is a group such
that $H/L(H)$ is finite, then so are $K(H)$ and ${\rm Aut}(H)$. The converse of this result is also true, as it has been shown in \cite{13}.

Clearly, the absolute center of a group is abelian, since $L(H)\subseteq Z(H)$ for all groups $H$. It was determined for several classes of $p$-groups, such as for abelian $p$-groups in \cite{6}, for minimal non-abelian $p$-groups in \cite{18} or for $p$-groups of maximal class in \cite{19}. Note that in all these cases we have $$L(H)\cong C_p^k \mbox{ for certain } k=0,1,2.$$Also, absolute centers of direct products of such groups of coprime orders will be direct products of elementary abelian $p$-groups of rank $\leq 2$, according to Lemma 2.1 of \cite{18}. This leads to the following natural question:
\begin{equation}
\mbox{Which (abelian) groups occur as absolute centers of finite groups}?
\end{equation}

In what follows, we will show that finite cyclic groups satisfy this property. More precisely, the following more powerful result holds.

\begin{theorem}
All finite cyclic groups are completely $L$-realisable.
\end{theorem}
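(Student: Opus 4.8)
The plan is to reduce the problem to prime powers and, for each prime power, to realise $C_{p^{e}}$ as the absolute center of a single, carefully chosen ZM-group whose lattice of subgroups already accounts for every divisor of $p^{e}$. First I would recall the presentation
$ZM(m,n,r)=\langle a,b\mid a^{m}=b^{n}=1,\ b^{-1}ab=a^{r}\rangle$, with $\gcd(m,n)=\gcd(m,r-1)=1$ and $r^{n}\equiv1\pmod m$, and set $d=\mathrm{ord}_{m}(r)$. Using the structure of $\mathrm{Aut}(ZM(m,n,r))$, every automorphism acts by $a\mapsto a^{s}$ $(s\in(\mathbb{Z}/m)^{*})$ and $b\mapsto a^{t}b^{u}$, and compatibility with $b^{-1}ab=a^{r}$ forces $r^{u}\equiv r\pmod m$, i.e.\ $u\equiv1\pmod d$. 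Since $\langle a\rangle$ is characteristic and $Z(ZM(m,n,r))=\langle b^{d}\rangle\cong C_{n/d}$ (the $a$-part of a central element is trivial because $\gcd(m,r-1)=1$), a short computation using $(r-1)(1+r+\cdots+r^{d-1})=r^{d}-1\equiv0\pmod m$ gives $\phi(b^{d})=(b^{d})^{u}$. Hence multiplication by $u$ describes the action on the center, and the absolute center is the cyclic group
\[
L(ZM(m,n,r))\cong C_{g},\qquad g=\gcd\Bigl(\tfrac{n}{d},\ \bigl\{\,u-1 : u\in(\mathbb{Z}/n)^{*},\ u\equiv 1\pmod d\,\bigr\}\Bigr).
\]

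For $N=p^{e}$ I would then take $H=\langle a,b\mid a^{q}=b^{p^{2e}}=1,\ b^{-1}ab=a^{r}\rangle$, where $q$ is a prime with $q\equiv1\pmod{p^{e}}$ (Dirichlet) and $\mathrm{ord}_{q}(r)=p^{e}$. Here $d=p^{e}$ and $n/d=p^{e}$, so every admissible $u$ satisfies $u\equiv1\pmod{p^{e}}$, the action on the center is trivial, and $L(H)=Z(H)\cong C_{p^{e}}$; this settles condition (i). For complete realisability I would classify the subgroups: since $\langle a\rangle$ is the normal Sylow $q$-subgroup and the Sylow $p$-subgroup is cyclic, every subgroup is either cyclic of $p$-power order (whose absolute center is $1$ or $C_{2}$, harmless) or of the form $ZM(q,p^{k},r^{p^{2e-k}})$ for some $0\le k\le 2e$. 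Feeding the latter into the displayed formula should give $L\cong C_{p^{\max(0,\,k-e)}}$ (with only a harmless modification when $p=2$); as $k$ runs through $e,e+1,\dots,2e$ this realises precisely $C_{1},C_{p},\dots,C_{p^{e}}$, establishing (ii) and (iii) for prime powers.

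Finally, writing $N=\prod_{i}p_{i}^{e_{i}}$ and choosing the auxiliary primes $q_{i}$ distinct and disjoint from the $p_{j}$, I would set $H=\prod_{i}H_{i}$ with the factors $H_{i}$ of pairwise coprime order. Then each $H_{i}$ is a characteristic Hall subgroup, so $\mathrm{Aut}(H)=\prod_{i}\mathrm{Aut}(H_{i})$ and therefore $L(H)=\prod_{i}L(H_{i})\cong C_{N}$. Moreover every subgroup $K\le H$ decomposes as $K=\prod_{i}(K\cap H_{i})$ by coprimality, whence $L(K)=\prod_{i}L(K\cap H_{i})$; this reduces conditions (ii) and (iii) for $C_{N}$ to the prime-power case already handled, since $\prod_{i}C_{p_{i}^{b_{i}}}\cong C_{d}$ with $d=\prod_{i}p_{i}^{b_{i}}\mid N$, and each divisor is hit by choosing the factors appropriately.

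The main obstacle is the number-theoretic evaluation of $g$ for the subgroups: proving the clean identity $L(ZM(q,p^{k},r^{p^{2e-k}}))\cong C_{p^{\max(0,k-e)}}$ uniformly in $k$, which amounts to controlling the $p$-adic valuation of the gcd of $\{u-1\}$ over the units $u\equiv1\pmod{\mathrm{ord}}$, and to verifying that no subgroup inflates the absolute center beyond $C_{N}$ — in particular that the delicate case $\gcd(d,n/d)\neq1$ (which occurs precisely because of the ``squaring'' $n=p^{2e}$) still yields only divisors of $p^{e}$. Once this is in place, the coprime direct-product reduction, together with the splitting $\mathrm{Aut}(\prod_{i}H_{i})=\prod_{i}\mathrm{Aut}(H_{i})$, is routine.
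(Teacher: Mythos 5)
Your proposal is correct and takes essentially the same route as the paper: a Dirichlet prime $q\equiv 1\pmod{p^{e}}$ together with $r$ of order $p^{e}$ modulo $q$, the realising group ${\rm ZM}(q,p^{2e},r)$ with $L=Z\cong C_{p^{e}}$, the subgroups ${\rm ZM}(q,p^{e+j},r^{p^{e-j}})$ realising the divisors $C_{p^{j}}$, the bound $L\leq Z$ controlling arbitrary subgroups, and the coprime direct product (with $\mathrm{Aut}$ splitting over Hall factors) for composite $N$. The ``main obstacle'' you flag is in fact routine within your own framework: for $k>e$ every $u\equiv 1\pmod{p^{k-e}}$ is automatically a unit modulo $p^{k}$, so $\gcd\{u-1\}=p^{k-e}$ exactly and $L\bigl({\rm ZM}(q,p^{k},r^{p^{2e-k}})\bigr)\cong C_{p^{k-e}}$, while the $p=2$ anomaly is indeed harmless since $C_{1}$ is realised by the trivial subgroup and $C_{2}\leq C_{2^{e}}$.
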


The proof of Theorem 1.1 is based on computing the absolute center of a ZM-group, that is a finite group all of whose
Sylow subgroups are cyclic. By \cite{14}, such a group is of type
\begin{equation}
{\rm ZM}(m,n,r)=\langle a, b \mid a^m = b^n = 1,
\hspace{1mm}b^{-1} a b = a^r\rangle, \nonumber
\end{equation}
where the triple $(m,n,r)$ satisfies the conditions
\begin{equation}
{\rm gcd}(m,n)={\rm gcd}(m,r-1)=1 \mbox{ and } r^n
\equiv 1 \hspace{1mm}({\rm mod}\hspace{1mm}m). \nonumber
\end{equation}Note that $|{\rm ZM}(m,n,r)|=mn$, ${\rm ZM}(m,n,r)'=\langle a\rangle$ and $Z({\rm ZM}(m,n,r))=\langle b^d\rangle$, where $d$ is the multiplicative order of
$r$ modulo $m$, i.e.
\begin{equation}
d=o_m(r)={\rm min}\{k\in\mathbb{N}^* \mid r^k\equiv 1 \hspace{1mm}({\rm mod} \hspace{1mm}m)\}.\nonumber
\end{equation}

Finally, we mention that we were unable to give a complete answer to question (2), but, based on the above results, the following conjecture seems reasonable:

\bigskip\noindent{\bf Conjecture.} {\it All finite abelian groups are completely $L$-realisable.}
\bigskip

Most of our notation is standard and will usually not be repeated here. Elementary notions and results on groups can be found in \cite{14}.
For subgroup lattice concepts we refer the reader to \cite{20}.

\section{Automorphisms of ZM-groups}

The general form of automorphisms of a metacyclic group has been determined in the main result of \cite{5}. In our case, we get:

\begin{theorem}
Each automorphism of ${\rm ZM}(m,n,r)$ is given by
\begin{equation}
b^ua^v\mapsto b^{yu}a^{x_1v+x_2[u]_r},\, u,v\geq 0,\nonumber
\end{equation}for a unique triple of integers $(x_1,x_2,y)$ such that
\begin{equation}
0\leq x_1,x_2<m,\, (x_1,m)=1,\, 0\leq y<n \mbox{ and } y\equiv 1 \hspace{1mm}({\rm mod}\hspace{1mm}d),\nonumber
\end{equation}where
\begin{equation}
[u]_r=\left\{\barr{lll}
    \!\!1+r+\cdots+r^{u-1},&u>0\\
    \!\!0,&u=0.\earr\right.\nonumber
\end{equation}In particular, we have
\begin{equation}
|{\rm Aut}({\rm ZM}(m,n,r))|=m\varphi(m)\frac{n}{d}\,.\nonumber
\end{equation}
\end{theorem}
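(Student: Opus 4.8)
The plan is to exploit the fact that $\langle a\rangle$ is the derived subgroup of $G={\rm ZM}(m,n,r)$, hence characteristic, so that any $\alpha\in{\rm Aut}(G)$ must satisfy $\alpha(a)=a^{x_1}$ and must restrict to an automorphism of $\langle a\rangle$ of order $m$; this forces $(x_1,m)=1$. Passing to the quotient, $\alpha$ induces an automorphism $\bar\alpha$ of $G/\langle a\rangle\cong C_n$, and writing $\alpha(b)=b^{y}a^{x_2}$ with $0\le y<n$ and $0\le x_2<m$, the map $\bar\alpha$ sends $\bar b\mapsto \bar b^{\,y}$. Since $b^ua^v$ with $0\le u<n$, $0\le v<m$ is a normal form for the elements of $G$, the triple $(x_1,x_2,y)$ attached to $\alpha$ is unique, and the whole problem reduces to deciding exactly which triples arise.

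First I would establish necessity by imposing the defining relations on the images. The relation $a^m=1$ is preserved automatically, and $b^n=1$ is treated below. The decisive relation is $b^{-1}ab=a^r$: computing $\alpha(b)^{-1}\alpha(a)\alpha(b)=a^{x_1 r^{y}}$ and comparing with $\alpha(a)^r=a^{x_1 r}$ yields, since $(x_1,m)=1$, the congruence $r^{y}\equiv r\ ({\rm mod}\ m)$, i.e. $y\equiv 1\ ({\rm mod}\ d)$. The general form then follows from the power rule $(b^{y}a^{x_2})^{u}=b^{yu}a^{x_2[u]_{r^{y}}}$ together with the simplification $[u]_{r^{y}}\equiv[u]_r\ ({\rm mod}\ m)$, which holds precisely because $y\equiv 1\ ({\rm mod}\ d)$ makes $r^{jy}\equiv r^{j}\ ({\rm mod}\ m)$ for all $j$.

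Next I would prove sufficiency: given a triple satisfying the stated constraints, the assignment $a\mapsto a^{x_1}$, $b\mapsto b^{y}a^{x_2}$ respects all relations and so extends to an endomorphism. For $b^n=1$ one checks $(b^{y}a^{x_2})^{n}=b^{yn}a^{x_2[n]_{r^{y}}}=1$, using $[n]_{r^{y}}\equiv[n]_r\equiv 0\ ({\rm mod}\ m)$, which follows from $r^{n}\equiv 1\ ({\rm mod}\ m)$ and $(r-1,m)=1$. It then remains to verify that this endomorphism is in fact bijective, after which counting the admissible triples gives the order of ${\rm Aut}(G)$: there are $\varphi(m)$ choices for $x_1$, $m$ choices for $x_2$, and the contribution of $y$ is the number of admissible residues modulo $n$.

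The main obstacle, in my view, is the bijectivity step and the precise enumeration of the admissible $y$. Surjectivity of $\alpha$ amounts to $\langle a, b^{y}\rangle=G$, which forces $\bar b^{\,y}$ to generate $G/\langle a\rangle\cong C_n$, i.e. $(y,n)=1$; so coprimality of $y$ with $n$ must be carried along with $y\equiv 1\ ({\rm mod}\ d)$. Pinning the cardinality down therefore hinges on counting the residues $y$ that are simultaneously $\equiv 1\ ({\rm mod}\ d)$ and prime to $n$, and I would want to check carefully how these two conditions interact before trusting the stated factor $\tfrac{n}{d}$, since in general the count of such $y$ need not coincide with $\tfrac{n}{d}$.
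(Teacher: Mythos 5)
Your closing suspicion is exactly right, and it is decisive; there is no way to finish your proof as the statement is printed, because the factor $\tfrac{n}{d}$ is wrong in general. (Note that the paper gives no proof of this theorem at all --- it quotes it as a specialization of the main result of its reference [5] --- so the only argument to assess here is yours.) Everything you do up to the last step is correct: $\langle a\rangle$ is the derived subgroup, hence characteristic, so $\alpha(a)=a^{x_1}$ with $(x_1,m)=1$; the relation $b^{-1}ab=a^r$ forces $r^{y}\equiv r\ ({\rm mod}\ m)$, i.e. $y\equiv 1\ ({\rm mod}\ d)$; and conversely any triple with $(x_1,m)=1$, $y\equiv 1\ ({\rm mod}\ d)$ defines an endomorphism. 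But, as you observe, that endomorphism has image $\langle a,b^{y}\rangle$, of order $mn/\gcd(y,n)$, so it is an automorphism if and only if $\gcd(y,n)=1$ (equivalently, the induced map on $G/\langle a\rangle\cong C_n$ must be bijective). Since $d\mid n$, the reduction map $(\Z/n\Z)^{*}\to(\Z/d\Z)^{*}$ is surjective, so the number of admissible $y$ is $\varphi(n)/\varphi(d)$, and the correct count is
\begin{equation}
|{\rm Aut}({\rm ZM}(m,n,r))|=m\varphi(m)\,\frac{\varphi(n)}{\varphi(d)}\,,\nonumber
\end{equation}
which coincides with $m\varphi(m)\tfrac{n}{d}$ precisely when every prime divisor of $n$ divides $d$.

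When that condition fails, the printed formula overcounts, and the paper's own example exposes this: for ${\rm ZM}(5,48,2)$ one has $d=4$, so the theorem claims $|{\rm Aut}|=5\cdot 4\cdot 12=240$; but ${\rm ZM}(5,48,2)\cong C_3\times{\rm ZM}(5,16,2)$ with factors of coprime orders, whence ${\rm Aut}({\rm ZM}(5,48,2))\cong{\rm Aut}(C_3)\times{\rm Aut}({\rm ZM}(5,16,2))$ has order $2\cdot 80=160=5\cdot4\cdot\varphi(48)/\varphi(4)$. Concretely, $y=9$ satisfies $y\equiv 1\ ({\rm mod}\ 4)$, yet $a\mapsto a$, $b\mapsto b^{9}$ is a non-surjective endomorphism (image of order $80$). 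So the admissible $y$ are counted by $\varphi(n)/\varphi(d)$, not $\tfrac{n}{d}$, and your refusal to trust the stated factor was the right instinct rather than a gap in your argument. Two remarks to complete the picture: first, the error propagates to the paper's Corollary 2.2 (the order of the outer automorphism group) for general $(m,n,r)$; second, the groups actually used in the paper's main construction are ${\rm ZM}(p,q^{2\alpha},r)$ with $n=q^{2\alpha}$ and $d=q^{\alpha}$, where $n$ and $d$ have the same prime divisors, every $y\equiv 1\ ({\rm mod}\ q^{\alpha})$ is automatically coprime to $q^{2\alpha}$, and the stated parametrization and count are valid --- so the main theorem of the paper survives, but Theorem 2.1 as stated does not.
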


Since ${\rm Inn}({\rm ZM}(m,n,r))$ is of order $md$, Theorem 2.1 leads to the following corollary.

\begin{corollary}
We have
\begin{equation}
|{\rm Out}({\rm ZM}(m,n,r))|=\frac{\varphi(m)n}{d^2}\nonumber
\end{equation}and
\begin{equation}
{\rm ZM}(m,n,r) \mbox{ is a complete group } \Leftrightarrow\, \varphi(m)=n=d.\,\footnote{\,We note that in this case $m$ is a cyclic number.}\nonumber
\end{equation}
\end{corollary}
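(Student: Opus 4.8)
The plan is to derive both assertions directly from Theorem 2.1 together with the stated order $|{\rm Inn}({\rm ZM}(m,n,r))|=md$, so that essentially no new structural input is required. For the first formula I would simply invoke the definition ${\rm Out}(G)={\rm Aut}(G)/{\rm Inn}(G)$, valid for every group $G$, and compute
\[
|{\rm Out}({\rm ZM}(m,n,r))|=\frac{|{\rm Aut}({\rm ZM}(m,n,r))|}{|{\rm Inn}({\rm ZM}(m,n,r))|}=\frac{m\varphi(m)\frac{n}{d}}{md}=\frac{\varphi(m)n}{d^2}.
\]

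For the characterization of completeness, I would begin from the standard fact that a group $G$ is complete precisely when the canonical map $G\to{\rm Aut}(G)$, $g\mapsto(x\mapsto gxg^{-1})$, is an isomorphism; equivalently, when $Z(G)=1$ and ${\rm Out}(G)=1$. The strategy is therefore to translate each of these two conditions into an arithmetic statement about the triple $(m,n,r)$ and then combine them. For the centre condition I would use $Z({\rm ZM}(m,n,r))=\langle b^d\rangle$ and $o(b)=n$: since $r^n\equiv 1\,({\rm mod}\,m)$ forces $d=o_m(r)$ to divide $n$, the element $b^d$ has order $n/d$, so $Z({\rm ZM}(m,n,r))=1$ if and only if $n=d$. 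For the outer condition, the first formula shows ${\rm Out}({\rm ZM}(m,n,r))=1$ if and only if $\varphi(m)n=d^2$.

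It then remains to verify that these two arithmetic conditions hold simultaneously exactly when $\varphi(m)=n=d$. If both hold, substituting $n=d$ into $\varphi(m)n=d^2$ yields $\varphi(m)=d$, hence $\varphi(m)=n=d$; conversely, $\varphi(m)=n=d$ makes $n/d=1$ and $\varphi(m)n/d^2=1$, giving both $Z({\rm ZM}(m,n,r))=1$ and ${\rm Out}({\rm ZM}(m,n,r))=1$, so the group is complete.

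The arguments are elementary, so I do not expect a serious obstacle; the only points requiring a little care are recalling the correct two-part characterization of a complete group and justifying the divisibility $d\mid n$ that underlies the computation $|Z({\rm ZM}(m,n,r))|=n/d$. As a consistency check for the footnote, I would observe that the defining conditions $\gcd(m,n)=1$ together with $n=\varphi(m)$ immediately give $\gcd(m,\varphi(m))=1$, i.e. $m$ is indeed a cyclic number.
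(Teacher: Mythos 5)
Your proposal is correct and follows essentially the same route as the paper, which gives no explicit proof but derives the corollary exactly this way: dividing $|{\rm Aut}({\rm ZM}(m,n,r))|=m\varphi(m)\frac{n}{d}$ by $|{\rm Inn}({\rm ZM}(m,n,r))|=md$, and using the characterization of complete groups via $Z(G)=1$ and ${\rm Out}(G)=1$ together with $|Z({\rm ZM}(m,n,r))|=n/d$. Your added care about $d\mid n$ and the footnote's claim that $m$ is a cyclic number are exactly the details the paper leaves implicit.
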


The form of central automorphisms and of IA-automorphisms of ${\rm ZM}(m,n,r)$ can be also inferred from Theorem 2.1.

\begin{corollary}
Each central automorphism of ${\rm ZM}(m,n,r)$ is given by
\begin{equation}
b^ua^v\mapsto b^{yu}a^v,\, u,v\geq 0,\nonumber
\end{equation}for a unique integer $y$ such that
\begin{equation}
0\leq y<n \mbox{ and } y\equiv 1 \hspace{1mm}({\rm mod}\hspace{1mm}d)\nonumber
\end{equation}and
each IA-automorphism of ${\rm ZM}(m,n,r)$ is given by
\begin{equation}
b^ua^v\mapsto b^ua^{x_1v+x_2[u]_r},\, u,v\geq 0,\nonumber
\end{equation}for a unique pair of integers $(x_1,x_2)$ such that
\begin{equation}
0\leq x_1,x_2<m \mbox{ and } (x_1,m)=1.\nonumber
\end{equation}In particular, we have
\begin{equation}
|{\rm Aut_C}({\rm ZM}(m,n,r))|=\frac{n}{d} \mbox{ and } |{\rm Aut_{IA}}({\rm ZM}(m,n,r))|=m\varphi(m).\nonumber
\end{equation}
\end{corollary}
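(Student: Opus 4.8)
The plan is to read off both families of automorphisms directly from the explicit description in Theorem 2.1 by imposing the defining condition of each class. Recall that an automorphism $\alpha$ of a group $G$ is \emph{central} precisely when it induces the identity on $G/Z(G)$, i.e. $g^{-1}\alpha(g)\in Z(G)$ for all $g\in G$, and is an \emph{IA-automorphism} precisely when it induces the identity on the abelianization $G/G'$, i.e. $g^{-1}\alpha(g)\in G'$ for all $g\in G$. For $G={\rm ZM}(m,n,r)$ I would use the facts recorded in Section 1, namely $G'=\langle a\rangle$ and $Z(G)=\langle b^d\rangle$, both of which are characteristic and hence preserved by every $\alpha$.

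The key simplification is that each condition need only be checked on the generators $a$ and $b$. Indeed, the map $\bar\alpha$ induced by $\alpha$ on $G/\langle a\rangle$ (respectively on $G/\langle b^d\rangle$) is a homomorphism, and since the images of $a$ and $b$ generate the quotient, $\bar\alpha$ is the identity as soon as it fixes these two images. Thus I would first compute $a^{-1}\alpha(a)$ and $b^{-1}\alpha(b)$ from the formula of Theorem 2.1, then translate ``$\bar\alpha={\rm id}$'' into conditions on the triple $(x_1,x_2,y)$, repeatedly using that $\langle a\rangle\cap\langle b\rangle=1$ because $(m,n)=1$.

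Carrying this out, for a central automorphism the case $u=0$, $v=1$ gives $\alpha(a)=a^{x_1}$, so $a^{-1}\alpha(a)=a^{x_1-1}\in\langle b^d\rangle$ forces $x_1=1$; the case $u=1$, $v=0$ (with $[1]_r=1$) gives $\alpha(b)=b^{y}a^{x_2}$, so $b^{-1}\alpha(b)=b^{y-1}a^{x_2}$, and membership in $\langle b^d\rangle$ forces $x_2=0$, the factor $b^{y-1}$ already lying in $\langle b^d\rangle$ because $y\equiv 1\ ({\rm mod}\ d)$. What survives from Theorem 2.1 is then exactly the stated form $b^{u}a^{v}\mapsto b^{yu}a^{v}$ with $0\le y<n$ and $y\equiv 1\ ({\rm mod}\ d)$; counting the admissible $y$ gives $n/d$. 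Dually, for an IA-automorphism the generator $b$ yields $b^{-1}\alpha(b)=b^{y-1}a^{x_2}\in\langle a\rangle$, forcing $y=1$, while $a^{-1}\alpha(a)=a^{x_1-1}\in\langle a\rangle$ holds automatically and imposes no new restriction; this recovers $b^{u}a^{v}\mapsto b^{u}a^{x_1v+x_2[u]_r}$ with $0\le x_1,x_2<m$ and $(x_1,m)=1$, and the count is $\varphi(m)\cdot m=m\varphi(m)$.

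I expect the only genuinely delicate point to be the sufficiency direction: confirming that the conditions extracted from the two generators really guarantee $g^{-1}\alpha(g)$ lands in the prescribed subgroup for \emph{every} $g$, not merely for $a$ and $b$. As indicated above, this is handled cleanly by passing to the quotient and invoking that $\bar a,\bar b$ generate it, so that no element-by-element verification over all $b^{u}a^{v}$ is needed. The remaining bookkeeping—that the resulting maps still satisfy the constraints of Theorem 2.1 and that the two cardinalities are as claimed—is routine.
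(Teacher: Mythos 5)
Your proposal is correct and follows exactly the route the paper intends: the paper states this corollary without proof as something that ``can be inferred from Theorem 2.1,'' and your derivation—imposing the conditions $g^{-1}\alpha(g)\in Z(G)$ resp.\ $g^{-1}\alpha(g)\in G'$ on the parametrized automorphisms, checking them only on the generators $a,b$ via the induced map on the characteristic quotient, and using $\langle a\rangle\cap\langle b\rangle=1$ to extract $x_1=1,x_2=0$ (central case) resp.\ $y=1$ (IA case)—is precisely that inference, carried out with the sufficiency direction properly justified.
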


Next, we determine the absolute center of ${\rm ZM}(m,n,r)$.

\begin{theorem}
We have
\begin{equation}
L({\rm ZM}(m,n,r))=\langle b^{de}\rangle,\nonumber
\end{equation}where
\begin{equation}
e=\min\{s\in\mathbb{N}^*\mid d^2s\equiv 0 \hspace{1mm}({\rm mod}\hspace{1mm}n)\}.\nonumber
\end{equation}Moreover
\begin{equation}
|L({\rm ZM}(m,n,r))|=\frac{n}{(de,n)}=\frac{\frac{n}{d}}{(e,\frac{n}{d})}\,.\nonumber
\end{equation}
\end{theorem}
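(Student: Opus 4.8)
The plan is to exploit the inclusion $L(G)\subseteq Z(G)=\langle b^d\rangle$ valid for $G=\mathrm{ZM}(m,n,r)$ — every element fixed by all automorphisms is in particular fixed by all inner automorphisms, hence central — so that it suffices to decide which central elements $b^{dt}$ are fixed by every automorphism. First I would feed a generic central element $b^{dt}$ (that is, $u=dt$, $v=0$) into the formula of Theorem 2.1, obtaining $\alpha(b^{dt})=b^{ydt}a^{x_2[dt]_r}$, and then show that the $a$-part always vanishes, which collapses the problem to a single congruence in the exponent of $b$.

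The key step, and the one I expect to carry the whole argument, is the congruence $[dt]_r\equiv 0\pmod m$. I would establish it by grouping the geometric sum into $t$ blocks of length $d$: since $r^d\equiv 1\pmod m$ one gets $[dt]_r\equiv t\,[d]_r\pmod m$, and then $(r-1)\,[d]_r=r^d-1\equiv 0\pmod m$ together with $\gcd(r-1,m)=1$ forces $[d]_r\equiv 0\pmod m$. As $a^m=1$, this yields $a^{x_2[dt]_r}=1$, and therefore $\alpha(b^{dt})=b^{ydt}$ for every admissible triple $(x_1,x_2,y)$ produced by Theorem 2.1.

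Consequently, fixation reduces to the condition $b^{dt}\in L(G)$ if and only if $ydt\equiv dt\pmod n$, i.e. $dt(y-1)\equiv 0\pmod n$, for all admissible $y$. Writing the admissible values as $y=1+kd$ (recall $y\equiv 1\pmod d$), the family of constraints becomes $kd^2t\equiv 0\pmod n$, which is governed by its strongest member $k=1$, available precisely when $n>d$ (when $n=d$ the center, and hence $L(G)$, is trivial, and the stated formula returns $b^{de}=1$). Thus $b^{dt}\in L(G)$ exactly when $d^2t\equiv 0\pmod n$, i.e. when $e\mid t$ for $e=\min\{s\in\mathbb{N}^*\mid d^2s\equiv 0\pmod n\}$, which identifies $L(G)=\langle b^{de}\rangle$.

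Finally I would read off the order: $|\langle b^{de}\rangle|=n/(de,n)$ since $b$ has order $n$, and the alternative expression $\frac{n/d}{(e,n/d)}$ follows from $d\mid n$ via the identity $(de,n)=d\,(e,n/d)$. The only genuine subtlety is the vanishing of $[dt]_r$ modulo $m$; once that is in hand, the remainder is a routine manipulation of linear congruences.
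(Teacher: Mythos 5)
Your proposal is correct and follows essentially the same route as the paper's proof: restrict attention to the center $\langle b^d\rangle$, apply Theorem 2.1, observe that the $a$-part of the image vanishes, and reduce the fixation condition to $n\mid d^2t$ with $e$ chosen minimal. The only differences are that you actually verify the divisibility $m\mid [dt]_r$ (via $\gcd(r-1,m)=1$), which the paper merely asserts, and that you explicitly handle the degenerate case $n=d$, which the paper leaves implicit.
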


\begin{proof}
Since $L({\rm ZM}(m,n,r))\subseteq Z({\rm ZM}(m,n,r))$, it follows that
\begin{equation}
L({\rm ZM}(m,n,r))=\langle b^{de}\rangle \mbox{ for some } e\in\mathbb{N}^*.\nonumber
\end{equation}By Theorem 2.1, the condition $b^{de}\in L({\rm ZM}(m,n,r))$ means
\begin{equation}
b^{yde}a^{x_2[de]_r}=b^{de},\nonumber
\end{equation}that is
\begin{equation}
n\,|\,de(y-1) \mbox{ and } m\,|\,x_2[de]_r
\end{equation}for all pairs of integers $(x_2,y)$ satisfying $0\leq x_2<m$, $0\leq y<n$ and $y\equiv 1 \hspace{1mm}({\rm mod}\hspace{1mm}d)$. Clearly, the relations (3) hold if and only if they hold for $x_2=1$ and $y=d$, that is
\begin{equation}
n\,|\,d^2e \mbox{ and } m\,|\,[de]_r.\nonumber
\end{equation}Since $m\,|\,[de]_r$ is true for all $e\in\mathbb{N}^*$, we must have only $n\,|\,d^2e$ and the proof is completed by the remark that $e$ must be chosen minimal with respect to this condition.
\end{proof}

For example, for ${\rm ZM}(5,16,2)=SmallGroup(80,3)$ we get $d=4$, $e=1$ and so $L({\rm ZM}(5,16,2))=Z({\rm ZM}(5,16,2))\cong C_4$, while for ${\rm ZM}(5,48,2)=SmallGroup(240,5)$ we get $d=4$, $e=3$ and so $L({\rm ZM}(5,48,2))\cong C_4$. Note that in the second case we have $L({\rm ZM}(5,48,2))\neq Z({\rm ZM}(5,48,2))\cong C_{12}$.\footnote{More precisely, since ${\rm ZM}(5,48,2)\cong C_3\times{\rm ZM}(5,16,2)$, we have $L({\rm ZM}(5,48,2))\cong L(C_3)\times L({\rm ZM}(5,16,2))\cong 1\times C_4\cong C_4$ and $Z({\rm ZM}(5,48,2))\cong Z(C_3)\times Z({\rm ZM}(5,16,2))\cong C_3\times C_4\cong C_{12}$.}

\section{Proof of the main result}

First of all, we will show that $L({\rm ZM}(m,n,r))$ can be any cyclic group $C_N$ when $N=q^{\alpha}$ is a prime power. By Theorem 2.4, we must find a triple of positive integers $(m,n,r)$ such that
\begin{equation}
\left\{\barr{lll}
    \!\!{\rm gcd}(m,n)={\rm gcd}(m,r-1)=1\\
    \!\!r^n\equiv 1 \hspace{1mm}({\rm mod}\hspace{1mm}m)\\
    \!\!\frac{n}{(de,n)}=N,\earr\right.
\end{equation}where
\begin{equation}
\left\{\barr{lll}
    \!\!d=o_m(r)\\
    \!\!e=\min\{s\in\mathbb{N}^*\mid d^2s\equiv 0 \hspace{1mm}({\rm mod}\hspace{1mm}n)\}.\earr\right.\nonumber
\end{equation}
\smallskip

We start with the following auxiliary result.

\begin{lemma}
There exist a prime $p$ and a positive integer $r$ such that $o_p(r)=q^{\alpha}$.
\end{lemma}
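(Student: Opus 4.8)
The plan is to realise $q^{\alpha}$ as the multiplicative order of some residue modulo a suitably chosen prime $p$, exploiting the fact that the unit group $(\mathbb{Z}/p\mathbb{Z})^{*}$ is cyclic of order $p-1$. In a cyclic group of order $p-1$, an element of order exactly $q^{\alpha}$ exists if and only if $q^{\alpha}$ divides $p-1$. So the entire task reduces to producing a prime $p$ with $p\equiv 1\ (\mathrm{mod}\ q^{\alpha})$.

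First I would invoke Dirichlet's theorem on primes in arithmetic progressions. Since $\gcd(1,q^{\alpha})=1$, the progression $\{1+kq^{\alpha}\mid k\geq 0\}$ contains infinitely many primes; pick any such prime $p$. Note that $p\geq 1+q^{\alpha}>q^{\alpha}\geq q$, so automatically $p\neq q$ and $q^{\alpha}\mid p-1$.

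Next, fix a generator $g$ of the cyclic group $(\mathbb{Z}/p\mathbb{Z})^{*}$ and set $r=g^{(p-1)/q^{\alpha}}$. Writing $p-1=q^{\alpha}M$, the order of $r=g^{M}$ equals $(p-1)/\gcd(M,p-1)=(p-1)/M=q^{\alpha}$, that is $o_{p}(r)=q^{\alpha}$, which is exactly what is required. One then lifts $r$ to a positive integer representative, which does not change its order modulo $p$, completing the argument.

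The only genuine input is the existence of a prime $p\equiv 1\ (\mathrm{mod}\ q^{\alpha})$, so the main obstacle is the appeal to Dirichlet's theorem. If one prefers to avoid invoking such a heavy result for so modest a conclusion, I would instead use the elementary fact that every prime divisor $p$ of a value $\Phi_{q^{\alpha}}(t)$ of the $q^{\alpha}$-th cyclotomic polynomial satisfies either $p=q$ or $p\equiv 1\ (\mathrm{mod}\ q^{\alpha})$; choosing an integer $t$ for which $\Phi_{q^{\alpha}}(t)$ admits a prime factor different from $q$ then yields the desired prime $p$ without Dirichlet. Either route supplies the prime, after which the cyclic-group computation above finishes the proof.
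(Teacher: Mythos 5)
Your proof is correct and follows essentially the same route as the paper: Dirichlet's theorem supplies a prime $p\equiv 1\ (\mathrm{mod}\ q^{\alpha})$, and the cyclicity of $(\mathbb{Z}/p\mathbb{Z})^{*}$ then guarantees an element $r$ of order exactly $q^{\alpha}$. Your explicit choice $r=g^{(p-1)/q^{\alpha}}$ and the cyclotomic-polynomial alternative are fine additions, but the substance matches the paper's argument.
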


\begin{proof}
By Dirichlet's Theorem, the arithmetic progression $1+tq^{\alpha}$, $t\in\mathbb{N}$, contains an infinite number of primes. Let $p$ be one of them. Then $q^{\alpha}$ divides $p-1$. Since the multiplicative group of integers modulo $p$ is cyclic of order $p-1$, it contains elements of order $q^{\alpha}$. Thus there is $r\in\mathbb{N}^*$ such that $o_p(r)=q^{\alpha}$, as desired.
\end{proof}

Clearly, the triple $(p,q^{2\alpha},r)$, where $p$ and $r$ are given by Lemma 3.1, satisfies the conditions (4), that is we have
\begin{equation}
L({\rm ZM}(p,q^{2\alpha},r))\cong C_{q^{\alpha}}.\nonumber
\end{equation}

For the proof of Theorem 1.1, we also need the following corollary.

\begin{corollary}
Let $P$ be the set of all primes. Then for any $q_1, ..., q_k\in P$ and any $\alpha_1, ..., \alpha_k\in\mathbb{N}^*$, there exist distinct primes $p_1, ..., p_k\in P\setminus\{q_1,...,q_k\}$ and positive integers $r_1, ..., r_k$ such that $o_{p_i}(r_i)=q_i^{\alpha_i}$, $i=1,...,k$.
\end{corollary}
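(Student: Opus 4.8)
The plan is to bootstrap Lemma 3.1 by applying it $k$ times, the only genuinely new issue being to guarantee that the resulting primes $p_1,\dots,p_k$ can be taken pairwise distinct and disjoint from $\{q_1,\dots,q_k\}$. The engine behind Lemma 3.1 is Dirichlet's theorem, which produces not merely one prime but \emph{infinitely many} primes in each relevant arithmetic progression; this surplus is exactly what will make a simultaneous, distinctness-respecting choice possible.

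Concretely, I would choose the primes one at a time, arguing by induction on the index $i$. Fix $i$ and suppose $p_1,\dots,p_{i-1}$ have already been selected. By Dirichlet's theorem, the progression $1+tq_i^{\alpha_i}$, $t\in\mathbb{N}$, contains infinitely many primes. The set of values that must be avoided, namely $\{q_1,\dots,q_k\}\cup\{p_1,\dots,p_{i-1}\}$, is finite, so one may pick a prime $p_i$ lying in this progression but outside that finite set. Since then $q_i^{\alpha_i}\mid p_i-1$ and the multiplicative group of integers modulo $p_i$ is cyclic of order $p_i-1$, exactly as in the proof of Lemma 3.1 there exists $r_i\in\mathbb{N}^*$ with $o_{p_i}(r_i)=q_i^{\alpha_i}$. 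After $k$ steps this yields distinct primes $p_1,\dots,p_k$, none equal to any $q_j$, together with the required integers $r_1,\dots,r_k$.

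The main obstacle relative to Lemma 3.1 is precisely this bookkeeping that forces distinctness and disjointness from the $q_j$; it is resolved because Dirichlet's theorem guarantees infinitely many primes in each progression, so discarding finitely many forbidden candidates at each stage never exhausts the supply. No other part of the statement requires anything beyond the cyclicity of $(\mathbb{Z}/p_i\mathbb{Z})^\times$ already invoked in Lemma 3.1, so I expect the write-up to be short.
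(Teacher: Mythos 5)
Your proposal is correct and follows exactly the paper's approach: the paper's proof likewise applies Lemma 3.1 for each $i$ and observes that, by Dirichlet's theorem, the primes $p_1,\dots,p_k$ can be chosen distinct and different from $q_1,\dots,q_k$. Your write-up merely spells out the inductive bookkeeping (avoiding a finite forbidden set at each step) that the paper leaves implicit.
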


\begin{proof}
We apply Lemma 3.1 for all $i=1,...,k$. It suffices to observe that the primes $p_1$, ..., $p_k$ can be chosen to be distinct, and different from $q_1, ..., q_k$, by Dirichlet's Theorem.
\end{proof}

We are now able to prove our main result.

\begin{proof}[Proof of Theorem 1.1.]
Let $N\geq 2$ be an integer and $N=q_1^{\alpha_1}\cdots q_k^{\alpha_k}$ be the decomposition of $N$ as a product of prime factors. We choose $p_1$, ..., $p_k$ and $r_1$, ..., $r_k$ as in Corollary 3.2, and let
\begin{equation}
H_i={\rm ZM}(p_i,q_i^{2\alpha_i},r_i),\, i=1,...,k.\nonumber
\end{equation}Then, for each $i$, we have
\begin{equation}
L(H_i)\cong C_{q_i^{\alpha_i}}.\nonumber
\end{equation}

Let $H=H_1\times\cdots\times H_k$. Since the groups $H_i$, $i=1,...,k$, are of coprime orders, we infer that they are characteristic in $H$ and therefore
\begin{equation}
L(H)\cong L(H_1)\times\cdots\times L(H_k)\cong C_{q_1^{\alpha_1}}\times\cdots\times C_{q_k^{\alpha_k}}\cong C_N,\nonumber
\end{equation}i.e. $C_N$ is $L$-realisable.

Obviously, every subgroup $G_1$ of $C_N$ is of type $C_{N_1}$, where $N_1=q_1^{\beta_1}\cdots q_k^{\beta_k}$ and $0\leq\beta_i\leq\alpha_i$, $i=1,...,k$. It is easy to see that each group $H_i$ contains a normal subgroup $S_i\cong{\rm ZM}(p_i,q_i^{\alpha_i+\beta_i},r_i)$. Since all normal subgroups of a ZM-group are characteristic, it follows that $S_i$ is characteristic in $H_i$, and so in $H$. Moreover, we have $L(S_i)\cong C_{q_i^{\beta_i}}$. Then the subgroup $S=S_1\times\cdots\times S_k$ of $H$ satisfies $L(S)\cong G_1$.

Conversely, we observe that every subgroup $T$ of $H$ is a direct product $T_1\times\cdots\times T_k$, where $T_i\leq H_i$, $i=1,...,k$. Then each $T_i$ is either cyclic or of type ${\rm ZM}(p_i,q_i^{\gamma_i},r_i)$ with $\alpha_i\leq\gamma_i\leq 2\alpha_i$. In the second case, we get
\begin{equation}
L(T_i)\leq Z(T_i)\cong C_{q_i^{\gamma_i-\alpha_i}}.\nonumber
\end{equation}Consequently, $L(T_i)$ is either trivial or isomorphic to a subgroup of $C_{q_i^{\alpha_i}}$. It follows that $L(T)\cong L(T_1)\times\cdots\times L(T_k)$ is isomorphic to subgroup of $C_{q_1^{\alpha_1}}\times\cdots\times C_{q_k^{\alpha_k}}\cong C_N$.

This completes the proof.
\end{proof}

\vspace*{3ex}
\small

\begin{minipage}[t]{7cm}
Georgiana Fasol\u a \\
Faculty of  Mathematics \\
"Al.I. Cuza" University \\
Ia\c si, Romania \\
e-mail: \!{\tt georgiana.fasola@student.uaic.ro}
\end{minipage}
\hfill\hspace{20mm}
\begin{minipage}[t]{5cm}
Marius T\u arn\u auceanu \\
Faculty of  Mathematics \\
"Al.I. Cuza" University \\
Ia\c si, Romania \\
e-mail: \!{\tt tarnauc@uaic.ro}
\end{minipage}

\end{document}